\newtheorem{theorem}{Theorem}[section]
\newtheorem{lemma}[theorem]{Lemma}
\theoremstyle{definition}
\newtheorem{definition}[theorem]{Definition}
\newtheorem{remark}[theorem]{Remark}
\numberwithin{equation}{section} \subjclass[2000]{30C45}
\begin{document}
\keywords{Analytic function, second Hankel functional, starlike and convex functions,
upper bound.}
\title[Hankel determinant for starlike and convex functions ]{Hankel determinant for starlike and convex functions of order $\frac{\alpha
}{2}-1$}
\author{A.A. AMOURAH}
\address{A.A. AMOURAH: Department of Mathematics, Faculty of Science and Technology,
Irbid National University, Irbid, Jordan.}
\email{alaammour@yahoo.com}
\author{ANAS ALJARAH}
\author{M. DARUS}
\address{ANAS ALJARAH and M. DARUS: School of Mathematical Sciences Faculty of Science
and Technology Universiti Kebangsaan Malaysia Bangi 43600 Selangor D. Ehsan,
Malaysia. }
\email{anasjrah@yahoo.com, maslina@ukm.edu.my.}

\begin{abstract}
The aim of this paper is to obtain an upper bound to the second Hankel
determinant $\left\vert a_{2}a_{4}-a_{3}^{2}\right\vert $ for starlike and
convex functions of order $\frac{\alpha}{2}-1$ $(1\leq\alpha\leq2)$.

\end{abstract}
\maketitle

\section{Introduction and preliminaries}

Let $\mathcal{A}$ denote the class of functions $f$ of the form:%
\begin{equation}
f(z)=z+\sum\limits_{n=2}^{\infty}a_{n}z^{n},\text{ \ \ \ \ } \label{1}%
\end{equation}
in the open unit disc $\mathbb{U}=\{z\in\mathbb{C}:$ $\left\vert z\right\vert
<1\}.$

The Hankel determinant of $f$ for $q\geq1$ and $n\geq1$ was defined by
Pommerenke(\cite{pom}, \cite{pom1}) as%

\begin{equation}
H_{q}(n)=\left\vert
\begin{array}
[c]{cccc}%
a_{n} & a_{n+1} & ... & a_{n+q-1}\\
a_{n+1} & ... & ... & ...\\
... & ... & ... & ...\\
a_{n+q-1} & ... & ... & a_{n+2q-2}%
\end{array}
\right\vert \label{10}%
\end{equation}

This determinant has been considered by many authors in the literature
\cite{tom}. For example Noor \cite{nor} determined the rate of growth of
$H_{q}(n)$ as $n\rightarrow\infty$ for functions given by (\ref{1}) with
bounded boundary. Ehrenborg \cite{eh} studied the Hankel determinant of
exponential polynomials. Janteng et al. discussed the Hankel determinant
problem for the classes of starlike functions with respect to symmetric points
and convex functions with respect to symmetric points in \cite{hal} and for
the functions whose derivative has a positive real part in \cite{hal1}.

Easily, one can observe that the Fekete and Szeg\"{o} functional is $H_{2}%
(1)$. Fekete and Szeg\"{o} \cite{fz} then further generalised the estimate
$\left\vert a_{3}-\mu a_{2}^{2}\right\vert $, where $\mu$ is real and $f\in
S$. For our discussion in this paper, we consider the Hankel determinant in
the case of $q=2$ and $n=2$:%
\[
\left\vert
\begin{array}
[c]{cc}%
a_{2} & a_{3}\\
a_{3} & a_{4}%
\end{array}
\right\vert .
\]
Let $P$ denote the class of functions%
\begin{equation}
P(z)=1+c_{1}z+c_{2}z^{2}+c_{3}z^{3}+\cdots=1+\sum\limits_{n=1}^{\infty}%
c_{n}z^{n}, \label{21}%
\end{equation}

which are analytic in $\mathbb{U}$ and satisfy $\operatorname{Re}\left\{
P(z)\right\}  $ $>0$ for any $z\in\mathbb{U}$.

In this paper, we seek sharp upper bound to the functional $\left\vert
a_{2}a_{4}-a_{3}^{2}\right\vert $ for the function $f$ belonging to the class
$S_{\alpha}^{\ast}$ and $C_{\alpha}$. The class $S_{\alpha}^{\ast}$ and
$C_{\alpha}$ are defined as follows.

\begin{definition}
\label{def123} Let $f$ be given by (\ref{1}). Then $f\in$ $S_{\alpha}^{\ast}$
if and only if
\begin{equation}
\operatorname{Re}\left\{  \frac{zf^{\prime}(z)}{f(z)}\right\}  >\frac{\alpha
}{2}-1,\text{ \ }(1\leq\alpha\leq2,\text{ }z\in\mathbb{U}). \label{77}%
\end{equation}

\end{definition}

\begin{definition}
\label{def1} Let $f$ be given by (\ref{1}). Then $f\in$ $C_{\alpha}$ if and
only if
\begin{equation}
\operatorname{Re}\left\{  1+\frac{zf^{\prime\prime}(z)}{f^{\prime}%
(z)}\right\}  >\frac{\alpha}{2}-1,\text{ \ }(1\leq\alpha\leq2,\text{ }%
z\in\mathbb{U}). \label{88}%
\end{equation}

\end{definition}

\bigskip\ To prove our main result in the next section, we shall require the
following two Lemmas:

\begin{lemma}
\label{lem1} (\cite{ch}, \cite{b})If $c\in P,$ then $\left\vert c_{n}%
\right\vert \leq2,$ for each $n\geq1$.
\end{lemma}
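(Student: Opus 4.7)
The plan is to derive this classical Carath\'eodory coefficient estimate from the Herglotz integral representation. First I would recall that every function $c \in P$, being analytic on $\mathbb{U}$ with positive real part and normalized by $c(0) = 1$, admits the representation
\[
c(z) = \int_{0}^{2\pi} \frac{1 + e^{-it}z}{1 - e^{-it}z}\, d\mu(t)
\]
for some probability measure $\mu$ on $[0, 2\pi]$; this comes from identifying the normalized functions of positive real part with the weak-$*$-closed convex hull of the M\"obius maps $z \mapsto (1 + \zeta z)/(1 - \zeta z)$, $|\zeta| = 1$.

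Next I would expand the M\"obius kernel as a geometric series
\[
\frac{1 + e^{-it}z}{1 - e^{-it}z} = 1 + 2\sum_{n=1}^{\infty} e^{-int} z^{n},
\]
which converges uniformly on compact subsets of $\mathbb{U}$. Interchanging sum and integral and comparing with the Taylor expansion (\ref{21}) produces the explicit Fourier-type formula $c_{n} = 2 \int_{0}^{2\pi} e^{-int}\, d\mu(t)$ for every $n \geq 1$.

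The bound then follows at once from the triangle inequality:
\[
|c_{n}| \;\leq\; 2 \int_{0}^{2\pi} \bigl|e^{-int}\bigr|\, d\mu(t) \;=\; 2,
\]
since $\mu$ is a probability measure. The only genuine obstacle is the invocation of the Herglotz representation itself, which I would cite from a standard reference (such as Duren's \emph{Univalent Functions}) rather than reprove here; once that is granted, the argument reduces to a one-line estimate on Fourier coefficients of a probability measure. Sharpness is witnessed by $c(z) = (1+z)/(1-z)$, whose coefficients all equal~$2$.
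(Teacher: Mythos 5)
Your argument is correct: the Herglotz representation, the geometric-series expansion of the kernel, and the resulting formula $c_{n}=2\int_{0}^{2\pi}e^{-int}\,d\mu(t)$ together give $|c_{n}|\leq 2$ immediately, and $(1+z)/(1-z)$ shows sharpness. Note, however, that the paper itself offers no proof of this lemma --- it is quoted as a known result with citations to Pommerenke--Jensen and Simon --- so there is nothing to compare against; your write-up is simply the standard textbook proof of the Carath\'eodory coefficient bound, modulo the (reasonable) decision to cite rather than reprove the Herglotz representation. If you wanted an argument avoiding Herglotz entirely, you could instead use the mean-value identity $c_{n}r^{n}=\frac{1}{\pi}\int_{0}^{2\pi}\operatorname{Re}\bigl\{c(re^{i\theta})\bigr\}e^{-in\theta}\,d\theta$ together with $\operatorname{Re}\{c\}>0$ and $\frac{1}{2\pi}\int_{0}^{2\pi}\operatorname{Re}\{c(re^{i\theta})\}\,d\theta=1$, then let $r\to 1^{-}$; this is more elementary but yields the same bound.
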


\begin{lemma}
\label{lem22}(\cite{z}, \cite{r}, \cite{r1})If $c\in P,$ then%
\[
2c_{2}=c_{1}^{2}+(4-c_{1}^{2})x,
\]%
\[
4c_{3}=c_{1}^{3}+2c_{1}(4-c_{1}^{2})x-c_{1}(4-c_{1}^{2})x^{2}+2(4-c_{1}%
^{2})(1-\left\vert x\right\vert ^{2})z,
\]

for some $x$ and $z$ satisfying $\left\vert x\right\vert \leq1$, $\left\vert
z\right\vert \leq1$ and $c_{1}\in\left[  0,2\right]  $.
\end{lemma}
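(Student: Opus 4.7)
The plan is to realise the two displayed formulas as the classical Libera--Zlotkiewicz parametrisation, and to derive them from the Carath\'eodory--Toeplitz characterisation of coefficient sequences of functions in $P$. As a preliminary step I would normalise by the rotation $p(z)\mapsto e^{-i\theta}p(e^{i\theta}z)$, which acts on Taylor coefficients by $c_n\mapsto e^{-in\theta}c_n$ and so allows me to assume $c_1$ is real and, by Lemma \ref{lem1}, lies in $[0,2]$.

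For the first identity, set $c_0=2$ and $c_{-n}=\overline{c_n}$, and invoke the Carath\'eodory--Toeplitz theorem: the sequence is realised by some $p\in P$ iff the Toeplitz matrices $T_k=(c_{i-j})_{0\le i,j\le k}$ are positive semi-definite. Applied to $T_2$ with $c_1$ fixed, the condition $\det T_2\ge 0$ works out to $|c_2-c_1^2/2|\le (4-c_1^2)/2$, so $c_2$ ranges over a closed disk. Parametrising this disk as $c_2-c_1^2/2=(4-c_1^2)x/2$ with $|x|\le 1$ yields exactly $2c_2=c_1^2+(4-c_1^2)x$.

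For the second identity I would fix both $c_1$ and $x$ (equivalently $c_1$ and $c_2$) and repeat the argument one layer deeper with the $4\times 4$ matrix $T_3$. Positive semi-definiteness of $T_3$ again confines $c_3$ to a closed disk; expanding $\det T_3$ and substituting the expression for $c_2$ in terms of $x$, the centre of this disk turns out to be $\bigl(c_1^3+2c_1(4-c_1^2)x-c_1(4-c_1^2)x^2\bigr)/4$ and its radius $(4-c_1^2)(1-|x|^2)/2$. Writing a generic point of the disk as centre plus radius times some $z$ with $|z|\le 1$ then reproduces the second formula term by term.

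The main obstacle I anticipate is the bookkeeping in $\det T_3$: after substituting $c_2=\bigl(c_1^2+(4-c_1^2)x\bigr)/2$ and separating the real and imaginary parts of $c_3$, one has to show that the resulting quadratic constraint is a \emph{circle} and then identify its centre and radius as the precise rational functions of $c_1$ and $x$ stated above. A brief check is also needed in the degenerate cases $c_1=2$ or $|x|=1$, where the disk collapses to a point; in both cases the factor $(1-|x|^2)$ kills the $z$-contribution, so the formula remains valid for any choice of $z$ with $|z|\le 1$.
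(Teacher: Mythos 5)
The paper never proves Lemma \ref{lem22}; it is quoted verbatim from the cited sources \cite{z}, \cite{r}, \cite{r1}, so there is no in-paper argument to compare against. Your outline is, in substance, exactly the classical proof of Libera and Z\l otkiewicz: it rests on the positive semi-definiteness of the Toeplitz matrices $T_k=(c_{i-j})$ with $c_0=2$, $c_{-n}=\overline{c_n}$ (which for the ``only if'' direction you need follows directly from the Herglotz representation of $p\in P$). The condition $\det T_2\ge 0$ is indeed equivalent, for real $c_1\in[0,2]$, to $|2c_2-c_1^2|\le 4-c_1^2$, and $\det T_3\ge 0$ confines $c_3$ to a closed disk with exactly the centre and radius you state; that this constraint is automatically a disk (rather than some other conic) comes for free from the Schur-complement structure, since for fixed $c_1,c_2$ the quantity $\det T_3$ has the form $A-B\,|c_3-\beta|^2$ with $B\ge 0$. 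Your remark on the degenerate cases $c_1=2$ and $|x|=1$ is also the right thing to say. The one genuine slip is the normalising rotation: the map $p(z)\mapsto e^{-i\theta}p(e^{i\theta}z)$ does not preserve $P$ (it destroys $p(0)=1$ and the positivity of the real part), and moreover it sends $c_n\mapsto e^{i(n-1)\theta}c_n$, so it leaves $c_1$ \emph{unchanged} and cannot be used to make $c_1$ nonnegative. The correct substitution is $p(z)\mapsto p(e^{i\theta}z)$, which stays in $P$ and acts by $c_n\mapsto e^{in\theta}c_n$; choosing $\theta=-\arg c_1$ gives $c_1\ge 0$, and $c_1\le 2$ then follows from Lemma \ref{lem1} or from $\det T_1\ge 0$. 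With that correction, the only remaining work is the determinant bookkeeping you already flag, which is routine.
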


We employ techniques similar to these used earlier by Amourah et al.
({\cite{c1}, \cite{c2}, \cite{c3}, \cite{c4}, \cite{c6}) and }Al-Hawary et al.
{\cite{c5}.}

\section{Main Result}

\begin{theorem}
\label{thm221} If $f(z)\in$ $S_{\alpha}^{\ast},$ then
\begin{equation}
\left\vert a_{2}a_{4}-a_{3}^{2}\right\vert \leq\frac{(2-\frac{\alpha}{2})^{2}%
}{3}\left[  4\left(  (2-\frac{\alpha}{2})^{2}-1\right)  +3\right]  ,\text{
}(1\leq\alpha\leq2,\text{ }z\in\mathbb{U}). \label{rr}%
\end{equation}

\end{theorem}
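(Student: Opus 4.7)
The plan is to reduce the problem to bounding a specific polynomial in the moments of a Carath\'eodory function. Since $f\in S_{\alpha}^{\ast}$, the image of $zf'(z)/f(z)$ lies in the half-plane $\operatorname{Re} w>\alpha/2-1$ and equals $1$ at the origin, so there exists $P(z)=1+c_{1}z+c_{2}z^{2}+\cdots\in\mathcal{P}$ with
\[
\frac{zf^{\prime}(z)}{f(z)}=\left(\frac{\alpha}{2}-1\right)+\left(2-\frac{\alpha}{2}\right)P(z).
\]
Writing $\beta=2-\alpha/2\in\lbrack1,3/2]$, I would clear denominators in $zf^{\prime}(z)=f(z)\bigl[1+\beta(c_{1}z+c_{2}z^{2}+c_{3}z^{3}+\cdots)\bigr]$ and compare coefficients of $z^{2},z^{3},z^{4}$ to obtain
\[
a_{2}=\beta c_{1},\qquad 2a_{3}=\beta^{2}c_{1}^{2}+\beta c_{2},\qquad 6a_{4}=\beta^{3}c_{1}^{3}+3\beta^{2}c_{1}c_{2}+2\beta c_{3}.
\]
A direct algebraic simplification then yields the key identity
\[
a_{2}a_{4}-a_{3}^{2}=\frac{\beta^{2}}{12}\left[-\beta^{2}c_{1}^{4}+4c_{1}c_{3}-3c_{2}^{2}\right].
\]

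Next I would invoke Lemma \ref{lem22}. By the rotational invariance of the class $\mathcal{P}$ we may assume $c_{1}=c\in\lbrack0,2]$, and substitute the stated parametrizations of $c_{2}$ and $c_{3}$ into the bracket. Setting $A=4-c^{2}$ and $\mu=|x|\in\lbrack0,1]$, then using $|z|\leq1$ together with the factor $1-\mu^{2}\geq0$ in the triangle inequality, I would arrive at an estimate of the shape
\[
\bigl|a_{2}a_{4}-a_{3}^{2}\bigr|\leq\frac{\beta^{2}}{12}\,F(c,\mu),
\]
where $F(c,\mu)$ is a polynomial that is quadratic in $\mu$ and quartic in $c$, with coefficients depending on $\beta$.

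The last step is to maximize $F(c,\mu)$ over the rectangle $[0,2]\times\lbrack0,1]$. My approach is to fix $c$, inspect the sign of the coefficient of $\mu^{2}$ in $F(c,\mu)$ to decide whether the inner maximum is attained at an interior critical point or at the endpoint $\mu=1$, and then reduce to a one-variable maximization in $c\in\lbrack0,2]$. The anticipated outcome is that the supremum is realized at the corner $c=2$, where $A=0$ and only the term $-\beta^{2}c^{4}$ survives with absolute value $16\beta^{2}$, giving
\[
\bigl|a_{2}a_{4}-a_{3}^{2}\bigr|\leq\frac{\beta^{2}(4\beta^{2}-1)}{3}=\frac{(2-\alpha/2)^{2}}{3}\Bigl[4\bigl((2-\alpha/2)^{2}-1\bigr)+3\Bigr],
\]
which is exactly \eqref{rr}. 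The principal obstacle is precisely this two-variable optimization: one must verify carefully that no interior critical point of $F$ beats the endpoint $c=2$ uniformly for $\beta\in\lbrack1,3/2]$, which requires tracking the signs of the $\beta$-dependent coefficients throughout the case analysis.
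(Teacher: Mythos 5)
Your proposal follows the paper's proof essentially verbatim: the same coefficient identities $a_2=\beta c_1$, $2a_3=\beta c_2+\beta^2c_1^2$, $6a_4=2\beta c_3+3\beta^2c_1c_2+\beta^3c_1^3$, the same reduction to $\frac{\beta^2}{12}\left\vert 4c_1c_3-3c_2^2-\beta^2c_1^4\right\vert$, the same substitution via Lemma \ref{lem22} with the triangle inequality in $(c,\delta)\in[0,2]\times[0,1]$, and the same conclusion that the maximum sits at the corner $c=2$, $\delta=1$. One minor slip in your narrative: at $c=2$ the surviving quartic coefficient in the post-substitution polynomial is $-(4\beta^2-1)/4$ rather than $-\beta^2$ (the terms $4c_1c_3$ and $-3c_2^2$ also contribute $c^4$ pieces), so the bracket has absolute value $16\beta^2-4$, not $16\beta^2$ --- which is in fact consistent with the final bound $\frac{\beta^2(4\beta^2-1)}{3}$ you state, and is exactly how the paper's function $G(c)$ evaluates at $c=2$.
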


\begin{proof}
Since, $f\in S_{\alpha}^{\ast}$, by Definition \ref{def123} we have%
\begin{equation}
\frac{1-\frac{\alpha}{2}+\frac{zf^{\prime}(z)}{f(z)}}{(2-\frac{\alpha}{2}%
)}=P(z). \label{22}%
\end{equation}

Replacing $f(z)$ and $q(z)$ with their equivalent series expressions in
(\ref{22}), we have%
\[
\left(  1-\frac{\alpha}{2}+\frac{zf^{\prime}(z)}{f(z)}\right)  =(2-\frac
{\alpha}{2})P(z).
\]

Using the Binomial expansion in the left hand side of the above expression,
upon \ simplification, we obtain%
\begin{align}
&  (2-\frac{\alpha}{2})z+(2-\frac{\alpha}{2})\left(  c_{1}+a_{2}\right)
z^{2}+\left[  c_{2}+a_{2}c_{1}+a_{3}\right]  (2-\frac{\alpha}{2}%
)z^{3}\nonumber\\
&  +\left[  c_{3}+a_{2}c_{2}+a_{3}c_{1}+a_{4}\right]  (2-\frac{\alpha}%
{2})z^{4}+...\nonumber\\
&  =(2-\frac{\alpha}{2})z+(3-\frac{\alpha}{2})a_{2}z^{2}+(4-\frac{\alpha}%
{2})a_{3}z^{3}+(5-\frac{\alpha}{2})a_{4}z^{4}+\cdots. \label{112}%
\end{align}

On equating coefficients in (\ref{112}), we get%
\begin{align}
a_{2}  &  =(2-\frac{\alpha}{2})c_{1},\text{ }a_{3}=\frac{(2-\frac{\alpha}{2}%
)}{2}(c_{2}+(2-\frac{\alpha}{2})c_{1}^{2}),\nonumber\\
a_{4}  &  =\frac{(2-\frac{\alpha}{2})}{6}\left[  2c_{3}+3(2-\frac{\alpha}%
{2})c_{1}c_{2}+(2-\frac{\alpha}{2})^{2}c_{1}^{3}\right]  , \label{we}%
\end{align}

in the second Hankel functional%
\[
\left\vert a_{2}a_{4}-a_{3}^{2}\right\vert =\frac{(2-\frac{\alpha}{2})^{2}%
}{12}\left\vert 4c_{1}c_{3}-3c_{2}^{2}-c_{1}^{4}(2-\frac{\alpha}{2}%
)^{2}\right\vert .
\]

Using Lemma \ref{lem22}, it gives%
\begin{align*}
&  \left\vert a_{2}a_{4}-a_{3}^{2}\right\vert \\
&  =(2-\frac{\alpha}{2})^{2}\left\vert
\begin{array}
[c]{c}%
\frac{(4-c_{1}^{2})c_{1}^{2}x}{24}-\frac{\left(  4(2-\frac{\alpha}{2}%
)^{2}-1\right)  c_{1}^{4}}{48}\\
+\frac{(4-c_{1}^{2})(1-\left\vert x\right\vert ^{2})c_{1}z}{6}-\frac
{(4-c_{1}^{2})x^{2}(12+c_{1}^{2})}{48}%
\end{array}
\right\vert .
\end{align*}

Assume that $\delta=\left\vert x\right\vert \leq1,$ $c_{1}=c$ and $c$
$\in\lbrack0,2]$, using triangular inequality and $|z|\leq1$, we have%
\begin{align}
&  \left\vert a_{2}a_{4}-a_{3}^{2}\right\vert \nonumber\\
&  \leq(2-\frac{\alpha}{2})^{2}\left\{
\begin{array}
[c]{c}%
\frac{(4-c^{2})c^{2}\delta}{24}+\frac{\left(  4(2-\frac{\alpha}{2}%
)^{2}-1\right)  c^{4}}{48}\\
+\frac{(4-c^{2})(1-\delta^{2})c}{6}+\frac{(4-c^{2})\delta^{2}(12+c^{2})}{48}%
\end{array}
\right\} \\
&  =\frac{(2-\frac{\alpha}{2})^{2}}{48}\left\{
\begin{array}
[c]{c}%
\left(  4(2-\frac{\alpha}{2})^{2}-1\right)  c^{4}+8(4-c^{2})c\\
+2(4-c^{2})c^{2}\delta+(c-6)(c-2)(4-c^{2})\delta^{2}%
\end{array}
\right\} \\
&  =F(c,\delta). \label{y}%
\end{align}

We next maximize the function $F(c,\delta)$ on the closed region $\left[
0,2\right]  \times\left[  0,1\right]  .$ Differentiating $F(c,\delta)$ in
(\ref{y}) partially with respect to $\delta,$ we get%
\begin{align*}
\frac{\partial F}{\partial\delta}  &  =\frac{(2-\frac{\alpha}{2})^{2}}%
{48}\left[  2(4-c^{2})c^{2}+2(c-6)(c-2)(4-c^{2})\delta\right] \\
&  =\frac{(2-\frac{\alpha}{2})^{2}}{24}\left[  c^{2}+(c-6)(c-2)\delta\right]
(4-c^{2})
\end{align*}

We have $\frac{\partial F}{\partial\delta}>0$. Thus $F(c,\delta)$ cannot have
a maximum in the interior of the closed square $\left[  0,2\right]
\times\left[  0,1\right]  $. Moreover, for fixed $c\in\left[  0,2\right]  $%
\[
\max_{0\leq\delta\leq1}F(c,\delta)=F(c,1)=G(c).
\]%
\begin{equation}
G(c)=\frac{(2-\frac{\alpha}{2})^{2}}{48}\left[  4\left(  (2-\frac{\alpha}%
{2})^{2}-1\right)  c^{4}+48\right]  . \label{x1}%
\end{equation}%
\begin{equation}
G^{\prime}(c)=\frac{(2-\frac{\alpha}{2})^{2}}{48}\left[  16\left(
(2-\frac{\alpha}{2})^{2}-1\right)  c^{3}\right]  . \label{x2}%
\end{equation}

From the expression (\ref{x2}), we observe that $G^{\prime}(c)\geq0$ for all
values of $0\leq c\leq2$ and $1\leq\alpha\leq2.$ Therefore, $G(c)$ is a
monotonically increasing function of $c$ in the interval $\left[  0,2\right]
$ so that its maximum value occurs at $c=2$. From (\ref{x1}), we obtain%
\begin{equation}
\max_{0\leq c\leq2}G(2)=\frac{(2-\frac{\alpha}{2})^{2}}{3}\left[  4\left(
(2-\frac{\alpha}{2})^{2}-1\right)  +3\right]  . \label{x3}%
\end{equation}

From the expressions (\ref{y}) and (\ref{x3}), we obtain
\begin{equation}
\left\vert a_{2}a_{4}-a_{3}^{2}\right\vert \leq\frac{(2-\frac{\alpha}{2})^{2}%
}{3}\left[  4\left(  (2-\frac{\alpha}{2})^{2}-1\right)  +3\right]  .
\label{317}%
\end{equation}

This completes the proof of our theorem \ref{thm221}.\ \ \ \ \ \ \ \ \ \ \ \ \ \ \ \ \ \ \ \ \ \ \ \ \ \ \ \ \ \ \ \ \ \ \ \ \ \ \ \ \ \ \ \ \ \ \ \ \ \ \ \ \ \ \ \ \ \ \ \ \ \ \ \ \ \ \ \ \ \ 
\end{proof}

In particular, considering $\alpha=2$ in Theorem \ref{thm221}, we have the
following result.

\begin{remark}
\bigskip\label{rem223} If $f(z)\in$ $S_{2}^{\ast},$ then%
\[
\left\vert a_{2}a_{4}-a_{3}^{2}\right\vert \leq1.
\]

\end{remark}

\bigskip This inequality is sharp and coincides with that of Janteng, Halim
and Darus \cite{bs}.

\begin{theorem}
\label{thm2222} If $f(z)\in$ $C_{\alpha},$ then
\begin{equation}
\left\vert a_{2}a_{4}-a_{3}^{2}\right\vert \leq\frac{(2-\frac{\alpha}{2})^{2}%
}{144}\left[  \frac{17(2-\frac{\alpha}{2})^{2}+2(2-\frac{\alpha}{2}%
)+17}{1+(2-\frac{\alpha}{2})^{2}}\right]  ,\text{ }(1\leq\alpha\leq2,\text{
}z\in\mathbb{U}). \label{ss1}%
\end{equation}

\end{theorem}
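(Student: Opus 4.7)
The plan is to mirror the argument used for Theorem \ref{thm221}. By Definition \ref{def1}, for $f\in C_{\alpha}$ one has
\[
\frac{1-\frac{\alpha}{2}+\left(1+\frac{zf''(z)}{f'(z)}\right)}{2-\frac{\alpha}{2}}=P(z)
\]
for some $P\in \mathcal{P}$ with expansion (\ref{21}). Equivalently, the Alexander relation gives $f\in C_{\alpha}\iff zf'\in S_{\alpha}^{\ast}$, so the coefficients of $f$ can be read off from (\ref{we}) simply by dividing the $n$-th coefficient by $n$. Writing $\beta:=2-\frac{\alpha}{2}$ for brevity, this yields
\[
a_{2}=\frac{\beta}{2}c_{1},\qquad a_{3}=\frac{\beta}{6}\bigl(c_{2}+\beta c_{1}^{2}\bigr),\qquad a_{4}=\frac{\beta}{24}\bigl[2c_{3}+3\beta c_{1}c_{2}+\beta^{2}c_{1}^{3}\bigr].
\]

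Next I would substitute these into $a_{2}a_{4}-a_{3}^{2}$ and clear the common denominator $144$; the cross-terms in $\beta c_{1}^{2}c_{2}$ partially cancel and leave an identity of the shape
\[
\bigl|a_{2}a_{4}-a_{3}^{2}\bigr|=\frac{\beta^{2}}{144}\bigl|\,6c_{1}c_{3}+\beta c_{1}^{2}c_{2}-\beta^{2}c_{1}^{4}-4c_{2}^{2}\,\bigr|.
\]
I then invoke Lemma \ref{lem22} to express $c_{2}$ and $c_{3}$ in terms of $c:=c_{1}\in[0,2]$ and an auxiliary $x$ with $|x|\leq 1$, and apply the triangle inequality together with $|z|\leq 1$ exactly as in the proof of Theorem \ref{thm221}. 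This produces a two-variable bound
\[
\bigl|a_{2}a_{4}-a_{3}^{2}\bigr|\leq F(c,\delta),\qquad (c,\delta)\in[0,2]\times[0,1],\quad \delta=|x|,
\]
whose terms can be grouped in powers of $\delta$.

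Finally I would maximize $F$ on the closed rectangle. Differentiating in $\delta$ should again give $\partial F/\partial \delta\geq 0$ (the quadratic-in-$\delta$ part has non-negative coefficients on $[0,2]$), so the maximum in $\delta$ occurs at $\delta=1$, reducing the problem to $G(c):=F(c,1)$. Unlike the starlike situation where $G$ was monotone, here $G(c)$ will be a quartic in $c$ whose $c^{4}$-coefficient combines contributions of opposite sign (namely $-4\beta^{2}c^4$ against $+3c^4$ from the $x^{2}$-term and a positive $c^4$ from the $x$-term), so $G$ will have an interior critical point $c_{\ast}\in(0,2)$. Solving $G'(c_{\ast})=0$ and substituting into $G$ should produce exactly the rational expression $\frac{17\beta^{2}+2\beta+17}{1+\beta^{2}}$ appearing in (\ref{ss1}); as a sanity check, at $\alpha=2$ one has $\beta=1$ and the bound reduces to $\frac{1}{144}\cdot\frac{36}{2}=\frac{1}{8}$, the classical Janteng--Halim--Darus bound for convex functions.

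The main obstacle will be the bookkeeping in the last step: correctly identifying the interior maximizer $c_{\ast}$ and verifying that the $\alpha$-dependent coefficient of $c^{4}$ in $G$ is indeed negative throughout $1\leq\alpha\leq 2$, so that the critical point is a maximum rather than a minimum and lies within $[0,2]$. Once the algebra is carried through and the value $G(c_{\ast})$ is simplified, the stated bound (\ref{ss1}) follows at once.
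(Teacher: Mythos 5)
Your proposal follows essentially the same route as the paper's own proof: the same coefficient identities (which you obtain via the Alexander relation $zf'\in S_{\alpha}^{\ast}$ rather than by direct expansion, but the resulting $a_2,a_3,a_4$ and the functional $\frac{\beta^{2}}{144}\bigl|6c_{1}c_{3}+\beta c_{1}^{2}c_{2}-4c_{2}^{2}-\beta^{2}c_{1}^{4}\bigr|$ agree exactly), followed by Lemma \ref{lem22}, the triangle inequality, maximization at $\delta=1$, and the interior critical point $c_{\ast}^{2}=\frac{1+\beta}{1+\beta^{2}}$ of the quartic $G(c)=\frac{\beta^{2}}{144}\bigl[-(\beta^{2}+1)c^{4}+2(\beta+1)c^{2}+16\bigr]$, whose negative leading coefficient guarantees a maximum and yields the stated bound. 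The plan is correct and matches the paper; only the parenthetical bookkeeping of which terms contribute to the $c^{4}$-coefficient is slightly off, but that does not affect the argument.
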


\begin{proof}
Since, $f\in C_{\alpha}$, by Definition \ref{def1} we have%
\begin{equation}
\frac{2-\frac{\alpha}{2}+\frac{zf^{\prime\prime}(z)}{f^{\prime}(z)}}%
{(2-\frac{\alpha}{2})}=P(z). \label{ee3}%
\end{equation}

Replacing $f(z)$ and $q(z)$ with their equivalent series expressions in
(\ref{ee3}), we have%
\[
\left(  2-\frac{\alpha}{2}+\frac{zf^{\prime\prime}(z)}{f^{\prime}(z)}\right)
=(2-\frac{\alpha}{2})P(z).
\]

Using the Binomial expansion in the left hand side of the above expression,
upon \ simplification, we obtain%
\begin{align}
&  (2-\frac{\alpha}{2})c_{1}z+\left[  c_{2}+2a_{2}c_{1}\right]  (2-\frac
{\alpha}{2})z^{2}\label{1121}\\
&  +\left[  c_{3}+2a_{2}c_{2}+3a_{3}c_{1}\right]  (2-\frac{\alpha}{2}%
)z^{3}+...\nonumber\\
&  =2a_{2}z+6a_{3}z^{2}+12a_{4}z^{3}+\cdots.\nonumber
\end{align}

On equating coefficients in (\ref{1121}), we get%
\begin{align}
a_{2}  &  =\frac{(2-\frac{\alpha}{2})c_{1}}{2},\text{ }a_{3}=\frac
{(2-\frac{\alpha}{2})}{6}(c_{2}+(2-\frac{\alpha}{2})c_{1}^{2}),\label{wew}\\
a_{4}  &  =\frac{(2-\frac{\alpha}{2})}{12}\left[  c_{3}+\frac{3(2-\frac
{\alpha}{2})}{2}c_{1}c_{2}+\frac{(2-\frac{\alpha}{2})^{2}}{2}c_{1}^{3}\right]
,\nonumber
\end{align}

in the second Hankel functional%
\begin{align*}
&  \left\vert a_{2}a_{4}-a_{3}^{2}\right\vert \\
&  =\frac{(2-\frac{\alpha}{2})^{2}}{144}\left\vert 6c_{1}c_{3}-4c_{2}%
^{2}+(2-\frac{\alpha}{2})c_{1}^{2}c_{2}-(2-\frac{\alpha}{2})^{2}c_{1}%
^{4}\right\vert .
\end{align*}

Using Lemma \ref{lem22}, it gives%
\begin{align*}
&  \left\vert a_{2}a_{4}-a_{3}^{2}\right\vert \\
&  =\frac{(2-\frac{\alpha}{2})^{2}}{144}\left\vert
\begin{array}
[c]{c}%
\left(  \frac{1+(2-\frac{\alpha}{2})-2(2-\frac{\alpha}{2})^{2}}{2}\right)
c_{1}^{4}+\left(  \frac{2+(2-\frac{\alpha}{2})}{2}\right)  c_{1}^{2}%
(4-c_{1}^{2})x\\
-\frac{(8+c_{1}^{2})}{2}(4-c_{1}^{2})x^{2}+3c_{1}(4-c_{1}^{2})(1-\left\vert
x\right\vert ^{2})z
\end{array}
\right\vert .
\end{align*}

Assume that $\delta=\left\vert x\right\vert \leq1,$ $c_{1}=c$ and $c$
$\in\lbrack0,2]$, using triangular inequality and $|z|\leq1$, we have%
\begin{align}
\left\vert a_{2}a_{4}-a_{3}^{2}\right\vert  &  \leq\label{yy}\\
&  \frac{(2-\frac{\alpha}{2})^{2}}{144}\left\{
\begin{array}
[c]{c}%
\left(  \frac{1+(2-\frac{\alpha}{2})-2(2-\frac{\alpha}{2})^{2}}{2}\right)
c^{4}+3c(4-c^{2})+\\
\left(  \frac{2+(2-\frac{\alpha}{2})}{2}\right)  c^{2}(4-c^{2})\delta
+\frac{(c-2)(c-4)(4-c^{2})}{2}\delta^{2}%
\end{array}
\right\} \nonumber\\
&  =F(c,\delta).\nonumber
\end{align}

We next maximize the function $F(c,\delta)$ on the closed region $\left[
0,2\right]  \times\left[  0,1\right]  .$ Differentiating $F(c,\delta)$ in
(\ref{yy}) partially with respect to $\delta,$ we get%
\[
\frac{\partial F}{\partial\delta}=\frac{(2-\frac{\alpha}{2})^{2}}{144}\left[
\left(  \frac{2+(2-\frac{\alpha}{2})}{2}\right)  c^{2}(4-c^{2}%
)+(c-2)(c-4)(4-c^{2})\delta\right]  .
\]

We have $\frac{\partial F}{\partial\delta}>0$. Thus $F(c,\delta)$ cannot have
a maximum in the interior of the closed square $\left[  0,2\right]
\times\left[  0,1\right]  $. Moreover, for fixed $c\in\left[  0,2\right]  $%
\[
\max_{0\leq\delta\leq1}F(c,\delta)=F(c,1)=G(c).
\]%
\[
G(c)=\frac{(2-\frac{\alpha}{2})^{2}}{144}\left\{
\begin{array}
[c]{c}%
\left(  \frac{1+(2-\frac{\alpha}{2})-2(2-\frac{\alpha}{2})^{2}}{2}\right)
c^{4}+3c(4-c^{2})+\\
\left(  \frac{2+(2-\frac{\alpha}{2})}{2}\right)  c^{2}(4-c^{2})+\frac
{(c-2)(c-4)(4-c^{2})}{2}%
\end{array}
\right\}  ,
\]%
\begin{equation}
=\frac{(2-\frac{\alpha}{2})^{2}}{144}\left\{  -\left[  (2-\frac{\alpha}%
{2})^{2}+1\right]  c^{4}+2\left[  (2-\frac{\alpha}{2})+1\right]
c^{2}+16\right\}  . \label{z1}%
\end{equation}%
\begin{equation}
G^{\prime}(c)=\frac{(2-\frac{\alpha}{2})^{2}}{144}\left\{  -4\left[
(2-\frac{\alpha}{2})^{2}+1\right]  c^{3}+4\left[  (2-\frac{\alpha}%
{2})+1\right]  c\right\}  . \label{z2}%
\end{equation}%
\begin{equation}
G^{\prime\prime}(c)=\frac{(2-\frac{\alpha}{2})^{2}}{144}\left\{  -12\left[
(2-\frac{\alpha}{2})^{2}+1\right]  c^{2}+4\left[  (2-\frac{\alpha}%
{2})+1\right]  \right\}  . \label{z3}%
\end{equation}

For Optimum value of $G(c)$, consider $G^{\prime}(c)=0$. From (\ref{z2}), we
get%
\begin{equation}
c\left(  -\left[  (2-\frac{\alpha}{2})^{2}+1\right]  c^{2}+\left[
(2-\frac{\alpha}{2})+1\right]  \right)  =0. \label{z4}%
\end{equation}

We now discuss the following Cases.

Case 1) If $c=0$, then, from (\ref{z3}), we obtain%
\[
G^{\prime\prime}(c)=\frac{(2-\frac{\alpha}{2})^{2}}{36}\left[  (2-\frac
{\alpha}{2})+1\right]  >0,\text{ for }1\leq\alpha\leq2.
\]

From the second derivative test, $G(c)$ has minimum value at $c=0$.

Case 2) If $c\neq0$, then, from (\ref{z3}), we get%
\begin{equation}
c^{2}=\frac{1+(2-\frac{\alpha}{2})}{1+(2-\frac{\alpha}{2})^{2}}. \label{z5}%
\end{equation}

Using the value of $c^{2}$ given in (\ref{z5}) in (\ref{z3}), after
simplifying, we obtain%
\[
G^{\prime\prime}(c)=\frac{-60(2-\frac{\alpha}{2})^{2}}{144}\left[
1+(2-\frac{\alpha}{2})\right]  <0,\text{ for }1\leq\alpha\leq2.
\]

By the second derivative test, $G(c)$ has maximum value at $c$, where $c^{2}$
given in (\ref{z5}). Using the value of $c^{2}$ given by (\ref{z5}) in
(\ref{z1}), upon simpli cation, we obtain%
\begin{equation}
\max_{0\leq c\leq2}G(c)=\frac{(2-\frac{\alpha}{2})^{2}}{144}\left[
\frac{17(2-\frac{\alpha}{2})^{2}+2(2-\frac{\alpha}{2})+17}{1+(2-\frac{\alpha
}{2})^{2}}\right]  . \label{z6}%
\end{equation}

Considering, the maximum value of $G(c)$ at $c$, where $c^{2}$ is given by
(\ref{z5}) , from (\ref{yy}) and (\ref{z6}), we obtain%
\begin{equation}
\left\vert a_{2}a_{4}-a_{3}^{2}\right\vert \leq\frac{(2-\frac{\alpha}{2})^{2}%
}{144}\left[  \frac{17(2-\frac{\alpha}{2})^{2}+2(2-\frac{\alpha}{2}%
)+17}{1+(2-\frac{\alpha}{2})^{2}}\right]  . \label{z7}%
\end{equation}

This completes the proof of our Theorem \ref{thm2222}.\ \ \ \ \ \ \ \ \ \ \ \ \ \ \ 
\end{proof}

\begin{remark}
\bigskip\label{rem2233} If $f(z)\in$ $C_{2},$ then%
\[
\left\vert a_{2}a_{4}-a_{3}^{2}\right\vert \leq\frac{1}{8}.
\]

This inequality is sharp and coincides with that of Janteng, Halim and Darus
\cite{bs}.
\end{remark}

\end{document}